\font\Bbb=msbm10 scaled \magstep 2
\def\C{\hbox{\Bbb C}}
\def\Z{\hbox{\Bbb Z}}
\def\N{\hbox{\Bbb N}}
\font\midBbb=msbm10
\def\midZ{\hbox{\midBbb Z}}
\newtheorem{theorem}{\bf Theorem}
\newtheorem{example}[theorem]{\bf Example}
\newtheorem{proposition}[theorem]{\bf Proposition}
\newtheorem{algorithm}[theorem]{\bf Algorithm}
\newtheorem{conjecture}[theorem]{\bf Conjecture}
\title{Linear differential operators for generic algebraic curves}
\author{V.A.\,Krasikov}
\address{Institute of Mathematics, \newline
\indent Siberian Federal University,  \newline
\indent 660041, Krasnoyarsk, Russia.}
\email{vkrasikov@sfu-kras.ru}
\thanks{Both authors were supported by the grant MK-3684.2009.1 of the President of Russian Federation.}
\author{T.M.\,Sadykov}
\address{Institute of Mathematics, \newline
\indent Siberian Federal University,  \newline
\indent 660041, Krasnoyarsk, Russia.}
\email{sadykov@lan.krasu.ru}
\thanks{T.M.\,Sadykov was supported
by the scholarship of the "Dynasty"\, foundation
and by Russian Foundation for Basic Research, grant 09-01-12132.}
\begin{document}

\large

\begin{abstract}
We give a computationally efficient method for constructing the linear
differential operator with polynomial coefficients whose space of holomorphic solutions is
spanned by all the branches of a function defined by a generic algebraic curve.
The proposed method does not require solving the algebraic equation and can be applied
in the case when its Galois group is not solvable.
\end{abstract}

\maketitle

\section{Introduction}

To find relations satisfied by a given special function is a difficult and
important problem in the theory of special functions of mathematical physics.
The relations in question can involve derivatives, integrals, finite differences etc.
Knowing a global relation for a special function that is defined locally (e.g. by means
of a series converging in a neighbourhood of a point) allows one to deduce global properties
of that function. From this point of view, linear differential equations with
polynomial coefficients are of particular interest.
One of the reasons for this is the difficult problem of computing the analytic continuation
along a given path of a locally defined special function.
By identifying such a function with a solution to a system of linear differential
equations with polynomial coefficients which does not have any "extra solutions"\,
one can use standard techniques for investigating the analytic continuation of the function
under study (see, for instance, \cite{PST}).
Here by "extra solutions"\, we mean the solutions which are not branches of the function
under study, that is, which cannot be obtained from it by means of analytic continuation.
Observe that every germ of a (multivalued) analytic function satisfies a relation with entire
(in particular, polynomial) coefficients provided that this relation is valid for one of its germs
in a neighbourhood of some fixed nonsingular point.

The culmination of this approach is the Wilf-Zeilberger algorithmic proof theory
(see \cite{WilfZeilberger1992} and \cite{Zeilberger1990}) based on holonomic systems of equations.
In the present paper we thoroughly investigate the special case when the function under study
is algebraic and the holonomic system consists of a single ordinary linear homogeneous
differential equation with polynomial coefficients.
Despite all simplicity, this setup leads to formidable computational challenges.

The 21st problem in the Hilbert list was solved in 1989 by A.A.\,Bolibrukh who proved that it is
in general not possible to construct a linear fuchsian system of differential equations with
a prescribed monodromy group (see \cite{AnosovBolibruch}).
However, the problem of effective computation of a system of differential equations (and, in
particular, of a single differential equation) with a prescribed branching of solutions
(whenever this is possible) remains open and is in the focus of intensive research,
see \cite{Bostan}, \cite{CarraFerro}, and \cite{Cormier}.
The computer algebra system {\it Magma} has a built-in command {\it DifferentialOperator}
for finding such operators (see \cite{Magma}).
In the paper \cite{LarussonSadykov} a method for computing annihilating operators for a class of algebraic
functions was developed.
However, due to computational difficulties and software limitations, it could only be used for computing annihilating operators
for algebraic functions defined by equations with solvable Galois groups.
In the present paper we describe an algorithm which allows one to compute annihilating operators
for an essentially larger class of algebraic functions and does not require the solvability of
the Galois group
(see Examples \ref{ex:determinationOfSimpleFunctions} (4), \ref{ex:y^5+ay^4+x}, \ref{ex:generic quintic} and \ref{ex:y^6+ay^2+by+x}).

It is well-known that an ordinary linear differential equation with a prescribed solution space
can be found by means of the wronskian of a basis of this space.
However, from the computational point of view, the wronskian-based representation of the differential
equation for an analytic function (which is, in general, defined only locally) is merely a nonconstructive
existence theorem.
There are three main reasons for this.
First, to form the wronskian, one needs to choose a basis
in the space of germs of the given function at a nonsingular point. This requires computing
the analytic continuation of the given function along any path which is, in general, a difficult problem.
Secondly, to evaluate a determinant containing high-order derivatives of a given special function is
a task of a great computational complexity.
Finally, extracting the polynomial coefficients of the desired differential operator out of the obtained
combination of algebraic functions requires the full use of modern methods of computer algebra.
For instance, to compute the differential operator for the roots of the generic monic cubic by means of
the wronskian is already a challenge (see example in \S 5 in \cite{LarussonSadykov}).
In the general case, the wronskian-based construction is not suitable for computation
since no effective means of simplifying expressions which contain high-order derivatives of
special functions are presently known.

The present paper provides an algorithm for constructing the optimal (that is, of the smallest
possible order) linear homogeneous differential equation with polynomial coefficients for a univariate
algebraic function $y=y(x)$ implicitly defined by the equation
\begin{equation}
y^m + a_1 y^{m_1} + \ldots + a_n y^{m_n} + x = 0.
\label{eq:mainAlgebraic}
\end{equation}
This algorithm allows one to overcome the difficulties listed above.
That is, it does not require to solve the problems of analytic continuation, evaluation of determinants
and simplification of expressions involving derivatives of algebraic functions.
The proposed method is a development of the ideas of the work \cite{LarussonSadykov}.
It allows one to reduce the problem of computing the annihilating operator for an algebraic
function to the problem of finding a basis in the syzygy module of an ideal in the ring
of multivariate polynomials.
The presented algorithm differs from other methods (both recent and classical,
see \cite{Bostan}, \cite{CarraFerro}, \cite{Cockle}, and \cite{Cormier}) in its primary field
of application (it deals with generic algebraic equations), in the underlying concept (holonomic
systems of partial differential equations and noncommutative elimination) and the complexity
of differential operators that it can efficiently produce.
The capabilities and limitations of the proposed algorithm are summarized in Table~1.

The authors are thankful to Professor D.\,Zeilberger for helpful explanations giving
insight into holonomic systems approach and to Professor M.\,Singer for comments on
Galois theory.


\section{Annihilating operators for solutions of holonomic systems of differential equations}

In what follows we will denote by $\mathcal{D}_n$ the Weyl algebra of differential operators
with polynomial coefficients in~$n$ variables $x=(x_1,\ldots,x_n)\in\C^n.$
This algebra is generated by the operators $x_1,\ldots ,x_n,$ $\frac{\partial}{\partial x_1},\ldots ,\frac{\partial}{\partial x_n}$
satisfying the relations
$\frac{\partial}{\partial x_i}\circ x_j - x_j \circ \frac{\partial}{\partial x_i} = \delta_{ij}.$
Here "$\circ$" denotes the composition of differential operators.
The Weyl algebra is simple (see Chapter~1 in \cite{Bjork1}).
When speaking about ideals in the Weyl algebra, we will always mean its left ideals.

The following basic statement is well-known but not easy to find in the literature in the following explicit form.
It can be deduced from Theorem~2 in \cite{Tsarev}.
It also follows from Theorem~1.4.12, Proposition~1.4.9, and Lemma~2.2.3 in \cite{SST}.
\begin{proposition}
For any holonomic left ideal $I\subset \mathcal{D}_n$ and any $i \in \{ 1,\ldots, n \}$ there exists
a nonzero operator $P_i \in I$,
all of whose derivatives are with respect to the variable $x_i$, that is, an operator of the form
$$ P_i  \, = \, \sum_{j=1}^{N_i} a_{ij} (x_1, \dots, x_n)  \,  \frac{\partial^j}{\partial x_{i}^{j}}.$$
\label{prop:annOpInHolonomicIdeal}
\end{proposition}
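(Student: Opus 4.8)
The plan is to prove the proposition by a dimension count against the Bernstein filtration. Set $M=\mathcal{D}_n/I$; this is a holonomic $\mathcal{D}_n$-module, and if $u\in M$ denotes the class of $1$, then $M=\mathcal{D}_n u$ and, for $P\in\mathcal{D}_n$, one has $P\in I$ if and only if $Pu=0$. (If $I=\mathcal{D}_n$ the statement is trivial, so assume $M\neq 0$.) I would equip $\mathcal{D}_n$ with the Bernstein filtration $\Gamma_\bullet$, in which $\Gamma_k$ is the $\C$-span of the monomials $x^\alpha\partial^\beta$ with $|\alpha|+|\beta|\le k$, and carry it over to $M$ by $F_kM:=\Gamma_k u$. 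Since $\mathrm{gr}^\Gamma\mathcal{D}_n$ is a commutative polynomial ring in $2n$ variables and $\mathrm{gr}^F M$ is generated over it, in degree $0$, by the class of $u$, the filtration $F_\bullet M$ is good; hence by Bernstein's theory $\dim_\C F_kM$ coincides, for $k\gg 0$, with a polynomial in $k$ whose degree is the dimension of $M$, which equals $n$ because $M$ is holonomic.

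Next I would isolate the $\C$-subalgebra $A_i\subset\mathcal{D}_n$ generated by $x_1,\dots,x_n$ and $\partial/\partial x_i$; an operator lies in $A_i$ exactly when all its differentiations are with respect to $x_i$, i.e.\ when it has the form displayed in the proposition. The monomials $x^\alpha(\partial/\partial x_i)^j$ with $\alpha\in\N^n$, $j\in\N$ are $\C$-linearly independent, as they form part of a PBW basis of $\mathcal{D}_n$, and each such monomial with $|\alpha|+j\le k$ is a product of at most $k$ of the generators of $\mathcal{D}_n$, hence lies in $\Gamma_k\cap A_i$. Consequently $\dim_\C(\Gamma_k\cap A_i)\ge\binom{n+1+k}{n+1}$, which grows like a polynomial of degree $n+1$ in $k$.

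Now I would compare the two estimates. The inclusion $(\Gamma_k\cap A_i)u\subseteq\Gamma_k u=F_kM$ gives $\dim_\C(\Gamma_k\cap A_i)u\le\dim_\C F_kM$ for all $k$. If the $\C$-linear map $A_i\to M$, $P\mapsto Pu$, were injective, the left-hand side would equal $\dim_\C(\Gamma_k\cap A_i)$, a quantity of growth degree $n+1$, while the right-hand side has growth degree $n$ — impossible for $k\gg 0$. Hence $I\cap A_i\neq\{0\}$, so there is a nonzero $P\in I$ all of whose derivatives are with respect to $x_i$. Finally, to match the exact form in the statement, if the zeroth-order (multiplication) part $a_0(x)$ of such a $P$ is nonzero, say of degree $d$ in $x_i$, I would replace $P$ by $\partial^{d+1}/\partial x_i^{d+1}\circ P$: this again lies in $I\cap A_i$, is still nonzero since its order in $\partial/\partial x_i$ merely increases by $d+1$ while the leading coefficient is unchanged, and its zeroth-order part, being $\partial^{d+1}a_0/\partial x_i^{d+1}$, now vanishes.

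I expect the only genuinely delicate point to be the $\mathcal{D}$-module bookkeeping in the first step: one must check that the induced filtration $F_\bullet M$ is good, so that the dimension of $M$ may legitimately be read off from it, and then invoke the two standard facts of Bernstein's theory — that $\dim_\C F_kM$ is finite and that its eventual growth degree is a filtration-independent invariant equal to $n$ for a holonomic module. Everything else is elementary: the estimate for $\Gamma_k\cap A_i$ uses only the easy inclusion above, and the final comparison is merely the observation that a polynomial of degree $n+1$ cannot be eventually dominated by one of degree $n$.
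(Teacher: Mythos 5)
Your proof is correct, and it is genuinely self-contained where the paper is not: the paper offers no argument for this proposition at all, merely deferring to Theorem~2 of Tsarev and to Theorem~1.4.12, Proposition~1.4.9 and Lemma~2.2.3 of Saito--Sturmfels--Takayama. The route implicit in those references passes through the rational Weyl algebra $\C(x_1,\ldots,x_n)\langle\partial_1,\ldots,\partial_n\rangle$: a holonomic module has finite holonomic rank $r$, so $u,\partial_i u,\ldots,\partial_i^r u$ are linearly dependent over $\C(x)$, and clearing denominators produces $P_i$; that argument has the advantage of bounding the order $N_i$ by the rank (hence, via Theorem~\ref{thm:determination}, by the determination of the function), which is exactly what the paper later uses to call the operator \emph{optimal}. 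Your argument instead runs Bernstein's growth comparison directly: the induced Bernstein filtration on $M=\mathcal{D}_n/I$ is good and grows like $k^n$, while the subalgebra $A_i=\C[x_1,\ldots,x_n]\langle\partial_i\rangle$ intersected with $\Gamma_k$ grows like $k^{n+1}$, so $A_i\to M$ cannot be injective. This is more elementary (it needs only Bernstein's inequality and the filtration-independence of the Hilbert polynomial, not the finiteness of holonomic rank), and every step checks out, including the final normal-form computation showing that $\partial_i^{d+1}\circ P$ kills the order-zero coefficient while preserving the leading one, which is needed to match the paper's display in which the sum starts at $j=1$. The only thing your proof does not deliver that the cited one does is an effective bound on $N_i$; since the proposition as stated asks only for existence, this is not a gap.
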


The following statement is a consequence of the results in \cite{gkz89} and \cite{Mayr}.
It shows that algebraic functions defined by generic algebraic curves are annihilated
by holonomic ideals in $\mathcal{D}_n.$

\begin{theorem}
Any germ of the algebraic function $y(x_0,x_1,$ $\ldots,$$x_n)$ implicitly defined by the relation
\begin{equation}
x_n y^n + x_{n-1} y^{n-1} + \ldots + x_1 y + x_0 = 0,
\label{eq:denseGenericAlgebraic}
\end{equation}
satisfies the holonomic system of differential equations
\begin{equation}
\begin{array}{l}
\frac{\partial^2\,y}{\partial x_i\partial x_j}=\frac{\partial^2\,y}{\partial x_k\partial x_l} \text{, whenever } i+j=k+l, \\
\sum\limits_{i=0}^{n}i\,x_i\,\frac{\partial y}{\partial x_i}=-y \quad \text{and} \quad \sum\limits_{i=0}^{n} x_i\,\frac{\partial y}{\partial x_i}=0.
\end{array}
\label{eq:MayrSystem}
\end{equation}
Conversely, any holomorphic solution of (\ref{eq:MayrSystem}) defined locally in a neighbourhood of a
nonsingular point is a linear combination of germs of the function $y(x_0,x_1,$ $\ldots,$ $x_n)$ at this point.
\label{thm:Mayr}
\end{theorem}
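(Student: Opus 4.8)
The plan is to verify the forward implication by direct differentiation of the defining relation~(\ref{eq:denseGenericAlgebraic}), and then to establish the converse by a dimension count combined with an explicit description of the solution space of the system~(\ref{eq:MayrSystem}).

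For the forward direction, I would differentiate $x_n y^n + \ldots + x_1 y + x_0 = 0$ implicitly with respect to each $x_i$. Writing $F(y,x) = \sum_{k=0}^{n} x_k y^k$ and $F_y = \partial F/\partial y$, one gets $\partial y/\partial x_i = -y^i/F_y$ for $i = 0, \ldots, n$ (with the convention $y^0 = 1$). From this the two first-order (Euler-type) equations are immediate: $\sum_i x_i\,\partial y/\partial x_i = -(\sum_i x_i y^i)/F_y = -F/F_y = 0$ using the relation itself, and $\sum_i i\,x_i\,\partial y/\partial x_i = -(\sum_i i x_i y^i)/F_y = -(y F_y)/F_y = -y$. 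For the second-order relations, differentiating $\partial y/\partial x_i = -y^i/F_y$ with respect to $x_j$ and using the product/quotient rule together with $\partial y/\partial x_j = -y^j/F_y$ and $\partial F_y/\partial x_j = j y^{j-1} + F_{yy}\,\partial y/\partial x_j$, one finds that $\partial^2 y/\partial x_i \partial x_j$ depends on $i$ and $j$ only through the combination $i+j$ (the terms assemble into $\frac{y^{i+j-1}}{F_y^2}\big(\text{symmetric in a way controlled only by }i+j\big) + \frac{y^{i+j}F_{yy}}{F_y^3}$); hence $\partial^2 y/\partial x_i \partial x_j = \partial^2 y/\partial x_k \partial x_l$ whenever $i+j = k+l$. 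This is the one genuine computation, but it is short.

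For the converse, I would argue as follows. The system~(\ref{eq:MayrSystem}) is holonomic (this is asserted in the surrounding text, citing \cite{gkz89} and \cite{Mayr}), so its local solution space at a nonsingular point is finite-dimensional; in fact one expects its holonomic rank to equal $n$, the number of branches of $y$. First I would show the $n$ branches $y_1, \ldots, y_n$ of the algebraic function are linearly independent over $\C$ as germs at a generic point — this follows because the equation~(\ref{eq:denseGenericAlgebraic}) is irreducible for generic coefficients and a nontrivial $\C$-linear relation $\sum c_j y_j \equiv 0$ among the roots would, upon applying Galois automorphisms, force constraints (e.g. on power sums / elementary symmetric functions) incompatible with the $x_i$ being independent variables. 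Then it suffices to check that the holonomic rank of~(\ref{eq:MayrSystem}) is exactly $n$: the contraction-type equations $\partial^2 y/\partial x_i\partial x_j = \partial^2 y/\partial x_k\partial x_l$ for $i+j=k+l$ cut the space of "admissible" second-order jets down drastically, and the two first-order equations pin down the remaining freedom, leaving an $n$-dimensional space. Once rank $= n$ is known, the $n$ linearly independent branches span the full solution space, so every local holomorphic solution is a $\C$-linear combination of germs of $y$.

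The main obstacle is the rank computation for the converse — showing precisely that $\dim_{\C}(\text{solutions}) \le n$. I would approach it either by passing to the characteristic variety and computing its degree, or, more concretely, by an elementary argument: a formal power series solution is determined by finitely many Taylor coefficients, the relations $i+j=k+l$ identify all mixed second derivatives along "anti-diagonals," and one shows inductively that all higher coefficients are then forced, with the first-order Euler equations reducing the count of free low-order parameters to exactly $n$. Alternatively, since~(\ref{eq:MayrSystem}) is essentially a GKZ / Horn-type hypergeometric system (a reduced $A$-hypergeometric system in the sense of \cite{gkz89}), its rank is computable as a normalized volume, and one checks this volume equals $n$; this is the slickest route if the GKZ machinery is taken as available. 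I would present the elementary induction as the primary argument and remark on the GKZ interpretation.
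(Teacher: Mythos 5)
Your proposal is essentially correct in outline, but it takes a different route from the paper for the simple reason that the paper does not prove this theorem at all: it is stated as "a consequence of the results in [gkz89] and [Mayr]", the system (\ref{eq:MayrSystem}) being the Gelfand--Kapranov--Zelevinsky hypergeometric system attached to the matrix $A=\binom{1\ 1\ \cdots\ 1}{0\ 1\ \cdots\ n}$ with parameter vector $(0,-1)$. Your forward direction is complete and correct -- writing $F=\sum_k x_k y^k$, the identity $\partial y/\partial x_i=-y^i/F_y$ gives the two Euler equations at once, and the second derivative works out to $(i+j)\,y^{i+j-1}/F_y^2-y^{i+j}F_{yy}/F_y^3$, which visibly depends only on $i+j$; this is essentially Mayr's original verification. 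For the converse, your reduction to "the $n$ branches are linearly independent" plus "holonomic rank $=n$" is the right skeleton: the independence argument via the full symmetric Galois group is fine (the orbit of a putative relation vector spans either the diagonal or the sum-zero hyperplane, and both are excluded since $\sum_j y_j=-x_{n-1}/x_n\not\equiv 0$ and $y_1\not\equiv y_2$). The one step you should not present as routine is the rank bound: rank $\geq$ normalized volume holds for every parameter, and the normalized volume of the segment $\{0,1,\ldots,n\}$ is $n$, but the equality rank $=$ volume at the \emph{specific} parameter $(0,-1)$ requires either checking nonresonance or invoking the Cohen--Macaulayness of the rational normal curve (Gelfand--Kapranov--Zelevinsky/Adolphson); this is exactly the content the paper outsources to its citations, so your "slickest route" is in fact the paper's route. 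Your proposed elementary alternative (inductively propagating Taylor coefficients along the antidiagonal identifications) is plausible but is left entirely as a sketch, so as written the converse half of your argument is an outline resting on the same external input the authors cite rather than an independent proof.
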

The system of differential equations (\ref{eq:MayrSystem}) is a special instance of
the Gelfand-Kapranov-Zelevinsky hypergeometric system introduced in \cite{gkz89}.
Its "dehomogenized" version for an algebraic curve with affine parameters
was investigated by Mellin in \cite{Mellin}.

Recall that the {\it Nilsson class} comprises (multi-valued) analytic functions of several
complex variables which have finite determination and moderate growth in arbitrary neighborhood
of any of their singularities (see \S 4.1.12 in \cite{Bjork2}).
Here by the determination of a multi-valued analytic function we mean the number of its linearly
independent germs in a neighborhood of a generic point in its domain of definition.
The determination of an analytic function of one complex variable which lies in the Nilsson
class and has finitely many singularities in $\C$ coincides with the smallest possible order
of an ordinary linear homogeneous differential equation with polynomial coefficients satisfied by this function.

Theorem \ref{thm:Mayr} together with Proposition \ref{prop:annOpInHolonomicIdeal} imply
the existence of a linear differential operator with polynomial coefficients whose space
of local holomorphic solutions at a nonsingular point is spanned by the roots of the
generic algebraic equation (\ref{eq:denseGenericAlgebraic}) and all of whose derivatives
are with respect to $x_0.$ This operator is defined uniquely up to a sign.
We will say that this operator is {\it optimal} for the given generic algebraic curve.

\begin{example}
\rm
Consider the algebraic function $y(x_0,x_1,x_2)$ defined as the solution to the quadratic equation
$x_2 y^2 + x_1 y + x_0 = 0.$
By Theorem \ref{thm:Mayr}, any of its branches lies in the kernel of any operator in the ideal $J$
with the generators
$$
A = \frac{\partial^2}{\partial x_0 \partial x_2} - \frac{\partial^2}{\partial x_{1}^2}, \hfill
B = x_1 \frac{\partial}{\partial x_1} + 2 x_2 \frac{\partial}{\partial x_2} + 1, \hfill
C = x_0 \frac{\partial}{\partial x_0} + x_1 \frac{\partial}{\partial x_1} + x_2 \frac{\partial}{\partial x_2}.
$$
Since the determination of the function $y(x_0,x_1,x_2)$ equals $2,$ Proposition \ref{prop:annOpInHolonomicIdeal}
yields the existence of a second order differential operator $P\in J,$ all of whose derivatives are with
respect to $x_0.$
Using the notation $\theta_i = x_i \frac{\partial}{\partial x_i},$ we can write the expansion of this operator
with respect to the basis of $J$ in the form
$$
P = x_0 x_{1}^2 x_2 A - ( (x_{1}^2 - 2 x_0 x_2) \theta_{0} + x_0 x_2 \theta_1 ) B
+ ( (x_{1}^2 - 4 x_0 x_2) \theta_0 + 2 x_0 x_2 \theta_1 ) C
$$
$$
=x_{0}^2 \left( (x_{1}^2 - 4 x_0 x_2) \frac{\partial^2}{\partial x_{0}^2}  - 2 x_2 \frac{\partial}{\partial x_0} \right).
$$
Of course, this optimal differential operator is only a monomial multiple of the wronskian of the roots
of the initial algebraic equation.
\label{ex:gkzIntoOrdinaryOperator}
\end{example}
In the next section we describe the algorithm for computing the optimal annihilating operator
for an arbitrary algebraic function satisfying an equation of the form (\ref{eq:mainAlgebraic}).
This will, in particular, perform the noncommutative elimination of all the derivatives except for $\frac{\partial}{\partial x_i}$
in the holonomic ideal (\ref{eq:MayrSystem}) by means of methods of commutative algebra only.


\section{Computing the annihilating operator for a given algebraic function}

We begin by computing the determinations of some elementary functions and
the corresponding differential equations.

\begin{example}
\rm
(1) Any rational function $f=p(x)/q(x),$ where $p(x),$ $q(x)\in\C[x],$ has  determination~$1$
and satisfies the first-order differential equation $pq f' = (p'q - p q')f.$

\noindent
(2) The function $f=x^a$ also has determination~$1$ for any $a\in\C$ since its analytic continuation
$e^{2 \pi i a}f$ around the only finite singularity $x=0$ is proportional to $f.$
It satisfies the differential equation $xf'=af.$

\noindent
(3) The function $f=\ln x$ has determination~$2,$ since its analytic continuation along any path
can be written in the form $\ln x + 2\pi k i,$ $k\in\Z.$ Thus any germ of $f$ at a nonsingular point
lies in the two-dimensional linear space with the basis $\{1,\ln x\}.$ The second-order differential
equation with polynomial coefficients satisfied by $f$ has the form $x f''+f'=0.$

\noindent
(4) The algebraic function $y=y(x)$ implicitly defined by the relation $y^5 + a y + x =0$ has determination $4$
(see Theorem \ref{thm:determination} below) and satisfies the differential equation
$(256 a^5 + 3125 x^4) y^{(4)} + 31250 x^3 y^{(3)} + 73125 x^2 y^{(2)}  + 31875 x y^{'} -1155 y = 0.$

\noindent
(5) Finally, the function $1/\ln x$ has infinite determination since its germs
$\left\{ 1/(\ln x + 2 \pi k i) \right\}_{k\in\midZ}$ are linearly independent.
This implies, in particular, that this function does not satisfy any linear homogeneous
differential equation with polynomial coefficients.

\label{ex:determinationOfSimpleFunctions}
\end{example}

In the present section we describe an algorithm for computing the optimal annihilating operator
for the roots of a generic algebraic equation with symbolic coefficients, that is,
an equation of the form (\ref{eq:mainAlgebraic}).
The roots of the equation $a_0 y^m + a_1 y^{m_1} + a_2 y^{m_2} + \ldots + a_n y^{m_n} + a_{n+1} =0$
(regarded as functions of $a = (a_0, \dots, a_{n+1})$) satisfy the holonomic
$A$-hypergeometric system with the vector of parameters $(0, -1)$
(see \cite{Sturmfels}), where
$$
A := \, \left(
\begin{array}{ccccc}
1 & 1& \dots &1&1 \\
m & m_1 & \dots & m_n & 0
\end{array}
\right).
$$
Namely, it is the left ideal in the Weyl algebra
$\C[a_0,\dots,a_{n+1},\partial_0,\dots,\partial_{n+1}]$ generated by
$$
\hbox{the toric operators} \quad
\partial^u - \partial^v \quad \hbox{for}\quad
 u,v\in \N^{n+2}   \quad \hbox{with}\quad A\cdot u=A\cdot v,
$$
$$
\label{eq:firstorder} \,\, \hbox{and the Euler operators} \quad
\sum_{j=0}^{n+1} a_j \partial_j \quad \hbox{and} \quad m a_0 +
\sum_{j=1}^{n} m_j a_j \partial_j +1.
$$
Thus by Proposition \ref{prop:annOpInHolonomicIdeal}, there always exists a linear differential operator
with polynomial coefficients in $a_0,\ldots,a_{n+1},$ and all of whose derivatives are with respect to $a_{n+1}.$
Setting $a_0 = 1$ and $a_{n+1} = x$ we obtain the annihilating operator for the solutions of (\ref{eq:mainAlgebraic}).
Using noncommutative elimination theory, one can compute this operator in a way
similar to that in Example \ref{ex:gkzIntoOrdinaryOperator}.
In the special case of a trinomial equation (that is, for $n=1$) the desired operator
is a right factor of the Mellin differential operator found in \cite{Mellin}.

The following theorem gives the order of the annihilating operator.
\begin{theorem}{\rm (E.\,Cattani, C.\,D'Andrea, A.\,Dickenstein \cite{CDD})}
The number of linearly independent (over the field of complex numbers) germs
of the solutions to the equation (\ref{eq:mainAlgebraic}) at a generic point $x\in\C$
and for generic values of the parameters $(a_1,\ldots, a_n)\in \C^n$ is given by
$$
R(m,m_1, \ldots, m_n) =
\left\{
\begin{array}{lr}
m - 1 + \left[ \frac{m_1}{m-1} \right],        & \text{\ \ if\ \ }  \text{\rm GCD}(m, m_1,\ldots, m_n) = 1,           \\
\frac{m}{\text{\rm GCD}(m, m_1,\ldots, m_n)},  & \text{\ \ if\ \ }  \text{\rm GCD}(m, m_1,\ldots, m_n) > 1.
\end{array}
\right.
$$
Here $[\,]$ denotes the integer part of a real number.
\label{thm:determination}
\end{theorem}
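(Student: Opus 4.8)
\emph{Sketch of a proof.} Write $m$ for the leading exponent in (\ref{eq:mainAlgebraic}) (the largest one, so that the equation has $m$ branches $y_1,\dots,y_m$, pairwise distinct for generic parameters), and set $N:=\dim_{\C}\langle y_1,\dots,y_m\rangle$ --- the number in question, which also equals $m-\dim K$, where $K=\{c\in\C^m:\ c_1y_1+\dots+c_my_m\equiv 0\}$ is the space of $\C$-linear relations among the branches. Since a relation survives analytic continuation, $K$ is a submodule of the permutation $\C G$-module $\C^m$, where $G\subseteq S_m$ is the monodromy group of $x\mapsto y$; the whole plan is to pin down $G$ closely enough to read off its possible submodules $K$.

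First I would describe the monodromy at $x=\infty$. Rewriting (\ref{eq:mainAlgebraic}) as $\tfrac1x y^m+\sum_i a_i\tfrac1x y^{m_i}+1=0$ and reading off the Newton polygon in $(y,1/x)$, the only lower edge runs from $(0,0)$ to $(m,1)$ with lattice length $1$; hence the $m$ branches lie on a single Puiseux cycle, the monodromy at $\infty$ is an $m$-cycle, and in particular $G$ is transitive (and (\ref{eq:mainAlgebraic}) is irreducible over $\C(x)$). Then I would split into two cases. If $\gcd(m,m_1,\dots,m_n)=1$, I would argue that $G$ is doubly transitive: the remaining branch points of $y$ lie over the (simple, for generic parameters) zeros of the discriminant, so each contributes a transposition to $G$; it then suffices to know that these transpositions connect all the sheets --- equivalently, that $f(y)/(y-y_1)$ stays irreducible over $\C(x)(y_1)$ --- whereupon in fact $G=S_m$. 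If instead $d:=\gcd(m,m_1,\dots,m_n)>1$, the substitution $w=y^d$ turns (\ref{eq:mainAlgebraic}) into an equation of the same shape, $w^{m/d}+a_1w^{m_1/d}+\dots+a_nw^{m_n/d}+x=0$, with coprime exponents; its $m/d$ branches $w_1,\dots,w_{m/d}$ are permuted by $G$, which therefore preserves the partition of the sheets into the $m/d$ fibres $\{i:y_i^{\,d}=w_j\}$ and is imprimitive.

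The count then proceeds as follows. In the coprime case, $\C^m=\mathbf 1\oplus W$ with $\mathbf 1=\C(1,\dots,1)$ and $W=\{c:\sum c_i=0\}$, and double transitivity makes $W$ irreducible, so $K\in\{0,\mathbf 1,W,\C^m\}$; the options $K\supseteq W$ are impossible, since they would force $N\le 1$, hence (using $y_1\cdots y_m=\pm x$) $y_1=cx^{1/m}$, which on substitution into (\ref{eq:mainAlgebraic}) makes every $a_i$ vanish --- excluded. So $K\in\{0,\mathbf 1\}$, with $K=\mathbf 1$ exactly when $y_1+\dots+y_m\equiv 0$, i.e.\ when the coefficient of $y^{m-1}$ in (\ref{eq:mainAlgebraic}) is zero, i.e.\ when $m_1\le m-2$; this yields $N=m-1$ when $m_1\le m-2$ and $N=m$ when $m_1=m-1$, which in both cases equals $m-1+\lfloor m_1/(m-1)\rfloor$. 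In the imprimitive case, as germs one has $y_i=\zeta_d^{\,e_i}w_{j(i)}^{1/d}$ with $\zeta_d=e^{2\pi i/d}$ (the $d$ sheets over a given $w$-value differ by $d$-th roots of unity), so $\langle y_1,\dots,y_m\rangle=\langle w_1^{1/d},\dots,w_{m/d}^{1/d}\rangle$ has dimension $\le m/d$, with equality because these $m/d$ functions are $\C$-linearly independent for generic parameters; hence $N=m/d$.

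The hard part, I expect, is the monodromy step --- establishing double transitivity when $\gcd(m,m_1,\dots,m_n)=1$, which is a Galois-theoretic genericity statement (irreducibility of $f(y)/(y-y_1)$ over $\C(x)(y_1)$, or of the discriminant) and is where the argument stops being formal; the complementary claim in the imprimitive case (independence of the $w_j^{1/d}$) is routine, e.g.\ by degeneration. A more computational route would avoid the group altogether: one expands the branches at infinity as $y_i=c_0\sum_{k\ge 0}\hat v_k\,\zeta_m^{\,i(1-k)}x^{(1-k)/m}$, where the $\hat v_k$ are the coefficients of the reduced series $w(s)$ solving $w^m=1+\sum_i b_i s^{\,m-m_i}w^{m_i}$ and the only $i$-dependence sits in $\zeta_m^{\,i(1-k)}$; then $\sum_i c_iy_i\equiv 0$ iff $\hat v_k\,\widehat c(1-k)=0$ for all $k$ (with $\widehat c$ the discrete Fourier transform of $c$), so $N$ is the number of residues modulo $m$ hit by the support of $(\hat v_k)$. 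The cost is then a combinatorial lemma describing that support --- for a trinomial the coefficients in question are $\tfrac1j\,\mathrm{Res}_{u=1}\!\bigl(u^{m_1}/(u^m-1)\bigr)^{j}$, and their vanishing pattern is precisely what turns into the floor function and the $\gcd$-dichotomy of $R$.
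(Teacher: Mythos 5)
The paper itself contains no proof of this statement: it is imported verbatim from Cattani--D'Andrea--Dickenstein \cite{CDD}, where it is obtained by completely different means (analysis of the $A$-hypergeometric system attached to a monomial curve, its holonomic rank and explicit series solutions). So your sketch can only be judged on its own terms. The overall reduction you set up is sound: $N=m-\dim K$ with $K$ a monodromy-invariant subspace of $\C^m$; the branches form a single Puiseux cycle at infinity, so $G$ is transitive and $\C^m=\mathbf{1}\oplus W$; in the coprime case the formula follows from irreducibility of $W$ (i.e.\ $2$-transitivity of $G$) together with the observation that $\sum_i y_i=0$ exactly when $m_1\le m-2$; and the substitution $w=y^d$ correctly handles the case $d=\gcd(m,m_1,\ldots,m_n)>1$ up to the (plausible, but unproved) independence of the $w_j^{1/d}$.

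The genuine gap is the step you half-acknowledge as ``the hard part,'' and the one justification you do offer for it is wrong. You argue that $G$ is $2$-transitive because ``the remaining branch points lie over the (simple, for generic parameters) zeros of the discriminant, so each contributes a transposition.'' For sparse equations of the form (\ref{eq:mainAlgebraic}) this fails: whenever the smallest middle exponent satisfies $m_n\ge 2$, the point $y=0$ is a critical point of $x=-(y^m+a_1y^{m_1}+\cdots+a_ny^{m_n})$ of multiplicity $m_n-1$ for \emph{all} parameter values, the local monodromy over the corresponding critical value is an $m_n$-cycle rather than a transposition, and the discriminant has a root of multiplicity $m_n-1$ there. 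For example $y^5+ay^3+x=0$ and $y^6+ay^4+by^3+x=0$ both have coprime exponents yet carry a $3$-cycle over $x=0$, so ``transitive and generated by transpositions, hence $S_m$'' is not available. The conclusion $G=S_m$ can likely still be rescued (e.g.\ via Jordan's criterion: primitive plus a transposition, with primitivity extracted from the $m$-cycle at infinity and coprimality), or one can bypass the group entirely via the Puiseux-coefficient count at infinity that you mention at the end --- but that route in turn hinges on the combinatorial lemma about the support of those coefficients, which you also leave unproved. As it stands, both halves of the dichotomy rest on an unestablished (and, as justified, incorrect) monodromy claim, so the proposal is a reasonable programme rather than a proof.
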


The following theorem is the foundation of our algorithm for computing optimal annihilating operators.

\begin{theorem}
Let $s_{i} = s_{i}(x, a_1, \ldots, a_{n}),$ $i=1, \ldots, m$ be the roots of the algebraic equation (\ref{eq:mainAlgebraic})
and denote $P(t)= \prod_{i=1}^m (t - s_i).$
For every $k=1, \ldots, m$ we define the ideal $I_k$ in the polynomial ring
with $m+n+1$ variables $\C[s_1,$ $\ldots,$ $s_m,$ $a_1,$ $\ldots,$ $a_n, x]$ to be
$$
      \left(
      (-1)^\ell
      (\ell - 1)!
      \left( \prod_{i \neq k} (s_k - s_i)^{2 m - 1} \right) \,
      \underset{t = s_k}{\rm res} \, \frac{1}{{P(t)}^\ell}, \,\,
      \ell = 1,\ldots, R(m,m_1,\ldots,m_n)
      \right).
$$
The vector of polynomial coefficients of the optimal annihilating operator for the algebraic
function defined by (\ref{eq:mainAlgebraic}) lies in the following syzygy module of the quotient of the ideal $I_k$
with respect to the Vieta relations:
\begin{equation}
\begin{array}{ll}
{\rm Syz}
\Big(
I_k \Big/ \big(
                 & \mathcal{S}_{m - m_j} (s_1, \ldots, s_m) - (-1)^{m - m_j} \, a_{j}, \, j=1, \ldots, n;   \\
                 & \mathcal{S}_{m - k} (s_1, \ldots, s_m), \, {\text\ for\ } k\not\in \{ 0, m_1,m_2, \ldots, m_n \}; \\
                 & \mathcal{S}_{m} (s_1, \ldots, s_m) - (-1)^m x
\big) \Big).
\end{array}
\label{eq:syzygy}
\end{equation}
\label{thm:syzygies}
Here $\mathcal{S}_j (s_1, \ldots, s_m)$ is the elementary symmetric polynomial of order $j$ in the variables $(s_1,\ldots,s_m).$
In the sequel we will denote the ideal generated by the Vieta relations by $\mathcal{V}.$
\end{theorem}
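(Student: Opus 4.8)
The plan is to translate the analytic condition ``$L$ annihilates every branch of $y$'' into the polynomial membership statement of the theorem, the bridge being an explicit residue formula for the $x$-derivatives of a single root $s_k$. First I would fix the shape of $L$: by Theorem \ref{thm:determination} and the existence of the optimal operator established above, $L$ has order $R=R(m,m_1,\ldots,m_n)$, so write $L=\sum_{\ell=0}^{R}c_\ell(x)\,y^{(\ell)}$ with $c_\ell\in\C[a_1,\ldots,a_n,x]$; a direct inspection of the two cases of Theorem \ref{thm:determination} (using $m_1<m$) gives $R\le m$, hence $2\ell-1\le 2m-1$ for every $\ell$ that will occur. Since $L$ annihilates all branches of $y$ it in particular annihilates the single root $s_k$, so I only need to analyse the identity $\sum_{\ell=0}^{R}c_\ell(x)\,s_k^{(\ell)}=0$.

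The core step is the residue formula. Because $P(t)=t^m+a_1t^{m_1}+\cdots+a_nt^{m_n}+x$ is the left-hand side of (\ref{eq:mainAlgebraic}) read as a polynomial in $t$, with $\partial P/\partial x\equiv1$, the branch $x\mapsto s_k(x)$ is the local functional inverse of $w\mapsto-(w^m+a_1w^{m_1}+\cdots+a_nw^{m_n})$ at a simple root. I would then apply the Lagrange--B\"urmann inversion formula; the one thing to verify is that the ``direct map minus base point'' quantity occurring in it equals $-P(w)$, which yields
$$s_k^{(\ell)}=(-1)^{\ell}(\ell-1)!\,\underset{t=s_k}{\rm res}\,\frac{1}{P(t)^{\ell}},\qquad \ell=1,\ldots,R.$$
Expanding the residue as $\frac{1}{(\ell-1)!}\,\frac{d^{\ell-1}}{dt^{\ell-1}}\!\left(\prod_{i\neq k}(t-s_i)^{-\ell}\right)\!\Big|_{t=s_k}$ and differentiating shows it is a rational function of $s_1,\ldots,s_m$ whose denominator divides $\prod_{i\neq k}(s_k-s_i)^{2\ell-1}$; since $\ell\le R\le m$, multiplying by $\prod_{i\neq k}(s_k-s_i)^{2m-1}$ clears all denominators and produces exactly the $\ell$-th generator of $I_k$, namely the polynomial $\prod_{i\neq k}(s_k-s_i)^{2m-1}\,s_k^{(\ell)}\in\C[s_1,\ldots,s_m]$. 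Applying the same factor to the order-zero term gives $\prod_{i\neq k}(s_k-s_i)^{2m-1}\,s_k$, which I would adjoin to the generating set; it already lies in $I_k$, being $-s_kP'(s_k)$ times the first generator.

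It remains to turn the functional identity into a statement modulo $\mathcal V$. Multiplying $\sum_\ell c_\ell(x)\,s_k^{(\ell)}=0$ by the nowhere-vanishing factor $\prod_{i\neq k}(s_k-s_i)^{2m-1}$ gives
$$c_0(x)\Bigl(\textstyle\prod_{i\neq k}(s_k-s_i)^{2m-1}s_k\Bigr)+\sum_{\ell=1}^{R}c_\ell(x)\Bigl(\textstyle\prod_{i\neq k}(s_k-s_i)^{2m-1}s_k^{(\ell)}\Bigr)=0,$$
a polynomial relation in $\C[s_1,\ldots,s_m,a_1,\ldots,a_n,x]$ valid whenever $s_1,\ldots,s_m$ are replaced by the actual roots of (\ref{eq:mainAlgebraic}). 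Now $\mathcal V$ cuts out precisely the incidence locus ``$s_1,\ldots,s_m$ are the roots of (\ref{eq:mainAlgebraic}) with parameters $a_1,\ldots,a_n,x$'': the elementary symmetric functions $\mathcal S_1,\ldots,\mathcal S_m$ form a regular sequence in $\C[s_1,\ldots,s_m]$, here set equal to $\pm a_j$, to $\pm x$ or to $0$, and the sparse polynomial in question has simple roots for generic parameters, so $\C[s_1,\ldots,s_m,a_1,\ldots,a_n,x]/\mathcal V$ is the (reduced) coordinate ring of that locus. Consequently a polynomial relation holds for the roots if and only if it belongs to $\mathcal V$, and the displayed identity says exactly that the coefficient vector $(c_0,c_1,\ldots,c_R)$ is a syzygy, modulo $\mathcal V$, of the (augmented) ideal $I_k$. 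This proves the theorem.

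The step I expect to be the main obstacle is the residue formula together with its denominator bound: recognising $s_k$ as a functional inverse is immediate, but pinning down the exact normalisation in Lagrange--B\"urmann (the sign $(-1)^{\ell}$, the factor $(\ell-1)!$, and the identification with $-P(w)$) and then checking that the single exponent $2m-1$ suffices to clear denominators uniformly for $\ell=1,\ldots,R$ is where the real content sits. A secondary, classical point that still needs care is that $\mathcal V$ is radical, so that ``holds for the roots'' and ``vanishes modulo $\mathcal V$'' genuinely coincide.
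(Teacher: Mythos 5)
Your proof follows essentially the same route as the paper's: the identity $\partial^\ell y_k/\partial x^\ell = (-1)^\ell(\ell-1)!\,\mathrm{res}_{t=s_k}\,P(t)^{-\ell}$ (which the paper extracts from the contour-integral representation of a root rather than from Lagrange--B\"urmann, but this is the same computation), the observation that multiplying by $\prod_{i\ne k}(s_k-s_i)^{2m-1}$ clears all denominators in the iterated derivatives of $\prod_{i\ne k}(t-s_i)^{-\ell}$, and the passage from the analytic relation among the derivatives of one branch to a syzygy modulo the Vieta ideal via the existence and order of the optimal operator (Theorems~\ref{thm:Mayr}, \ref{thm:determination} and Proposition~\ref{prop:annOpInHolonomicIdeal}). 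You are in fact somewhat more careful than the published proof at exactly the points you flag --- the bound $R\le m$ that makes the single exponent $2m-1$ suffice, the radicality of $\mathcal{V}$ needed to convert ``vanishes on the root locus'' into ``lies in $\mathcal{V}$'', and the order-zero term of the operator, which the theorem's list of generators of $I_k$ does not formally include --- so no gap is introduced.
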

\begin{proof}
Let $x\in\C$ be a point outside of the zero locus of the discriminant of the left-hand side in (\ref{eq:mainAlgebraic}).
Let $(a_1,\ldots, a_n)\in \C^n$ be a generic vector of parameters and let
$y_{k} (x,a_1,\ldots,a_n)$ denote the $k$-th branch of the solution $y(x,a_1,\ldots,a_n)$ to the algebraic equation (\ref{eq:mainAlgebraic}).
Let us now denote by~$D$ the differential operator
$$
D=\frac{\partial^m}{\partial s_1 \ldots \partial s_m}.
$$
Using the well-known contour integral representation for a solution to a univariate algebraic
equation (see Section 5 in \cite{LarussonSadykov}) we conclude that the generators of the ideal $I_k$
are polynomial multiples of the derivatives of the solution to (\ref{eq:mainAlgebraic}):
$$
\begin{array}{l}
\frac{\partial^\ell y_{k}(x,a_1,\ldots,a_n)}{\partial x^\ell} \\
= (-1)^\ell (\ell - 1)! \, \underset{t = s_k}{\rm res} \, \frac{1}{{P(t)}^\ell}
= \frac{(-1)^\ell}{((\ell - 1)!)^{m-1}} D^{\ell - 1} \left( \underset{t = s_k}{\rm res} \, \frac{1}{P(t)} \right)                 \\
= \frac{(-1)^\ell}{((\ell - 1)!)^{m-1}} D^{\ell - 1} \left( \frac{1}{(s_k - s_1) \ldots [k] \ldots (s_k - s_n)} \right)           \\
= (-1)^\ell \frac{\partial^{\ell - 1}}{\partial s_{k}^{\ell - 1}} \frac{1}{{((s_k - s_1) \ldots [k] \ldots (s_k - s_m))}^\ell}    \\
= -\sum\limits_{i_1 + \ldots [k] \ldots + i_n = \ell - 1}
\frac{(\ell + i_1 - 1)!\ldots [k] \ldots (\ell + i_n - 1)!}
{((\ell - 1)!)^{m-2} \, i_1! \ldots [k] \ldots i_m! \,
(s_k - s_1)^{\ell + i_1} \ldots [k] \ldots (s_k - s_n)^{\ell + i_n}}. \\
\end{array}
$$
This shows that the generators of the ideal $I_k$ are indeed elements of the ring
$\C[s_1,$ $\ldots,$ $s_m,$ $a_1,$ $\ldots,$ $a_n, x].$
By Theorem~\ref{thm:determination} the determination of the solution to (\ref{eq:mainAlgebraic}) equals $R(m,m_1,$ $\ldots,$ $m_n).$
Thus by Theorem~\ref{thm:Mayr} and Proposition~\ref{prop:annOpInHolonomicIdeal} there exists a
linear differential operator with polynomial coefficients (in $x,a_1,\ldots,a_n$) all of whose derivatives are with respect to $x$
and whose space of holomorphic solutions at a generic point is spanned by the branches of $y(x,a_1,\ldots,a_n)$.
For the sake of computational efficiency we factor out the Vieta relations.
This increases the number of variables involved in the generators of the ideal but decreases their degrees.
The desired differential operator is a relation between the derivatives of $y(x,a_1,\ldots,a_n)$ with polynomial
(and thus single-valued) coefficients. By the conservation principle for analytic functions the
same relation must be satisfied by any of the germs of $y(x,a_1,\ldots,a_n)$ at a nonsingular point.
Thus the coefficients of this relation lie in the syzygy module (\ref{eq:syzygy}).
\end{proof}

Observe that the elements of the syzygy module (\ref{eq:syzygy}) are polynomial vectors whose entries in general
depend on all of the variables $s_1,\ldots,s_m,$ $a_1, \ldots, a_n, x$.
The proof of Theorem \ref{thm:syzygies} implies that there exists an element of (\ref{eq:syzygy}) whose
entries only depend on $a_1, \ldots, a_n, x$.
It can be found by means of the following algorithm.

\begin{algorithm}
\rm
The actual computation of annihilating operators for algebraic functions was organized as follows:

1. Compute the basis of the ideal $I_1$ defined in Theorem \ref{thm:syzygies}.

2. Using the lexicographic order of the variables $s_1,\ldots, s_m$ compute the Gr\"obner basis of the ideal $\mathcal{V}$
defined by the Vieta relations (as defined in Theorem \ref{thm:syzygies}).

3. Perform polynomial reduction of the generators of the ideal $I_1$ by means of the Gr\"obner basis of the ideal $\mathcal{V}$.
That is, at this step, we use the Vieta relations as much as possible in order to simplify the generators of $\mathcal{V}$.

4. Factorize the obtained family of polynomials. The result has a very specific structure: it is a family
of polynomials in $\C[s_1,$ $\ldots,$ $s_m,$ $a_1,$ $\ldots,$ $a_n, x]$ whose elements are symmetric
with respect to $s_2,\ldots, s_m.$
Using the Gr\"obner basis of the ideal $\mathcal{V},$ reduce them to polynomials
in $\C[s_1,$ $a_1,$ $\ldots,$ $a_n, x].$
Let us denote this family of polynomials by $\mathcal{R}_1,\ldots, \mathcal{R}_m.$

5. Any $\C[a_1,\ldots,a_n, x]$-linear relation for the family of polynomials $\mathcal{R}_1,$ $\ldots,$ $\mathcal{R}_m$
transforms into a linear system of algebraic equations over the field of rational functions in the variables $a_1,\ldots,a_n, x.$
Proposition \ref{prop:annOpInHolonomicIdeal} and Theorem \ref{thm:determination} yield the existence of
an at least one-dimensional $\C$-vector space of solutions to this system of linear equations.
Finding a basis in this space and clearing the denominators, we obtain the desired polynomial coefficients
of the optimal annihilating operator for the initial algebraic function.~\hfill $\square$
\label{alg:main}
\end{algorithm}

\begin{example}
\rm
The linear space spanned by the roots of the algebraic equation
\begin{equation}
y^5 + 2y^4 - 3y^3 + y^2 + 5y + x = 0
\label{eq:y^5+2y^4-3y^3+y^2+5y+x=0}
\end{equation}
(in a neighbourhood of a point where the discriminant of this equation does not vanish)
coincides with the linear space of holomorphic solutions to the differential equation
{\small
$$
\begin{array}{l}
(-43728190560 + 795819153\, x - 53446888\, x^2 + 56028\, x^3)   \\
 (-1585575 + 71982\, x + 281583\, x^2 + 81342\, x^3 + 3125\, x^4) \,\, y^{(5)} + \\
15 (-650327879439783 - 5747872136026563\, x - 2400588229818366\, x^2 -   \\
 91559102743545\, x^3 - 304019551433\, x^4 - 131338505212\, x^5 + 128397500\, x^6) \,\, y^{(4)} + \\
60 (-1821690090417321 - 1560609625036728\, x - 98711280942848\, x^2 +  \\
 721492325057\, x^3 - 103787727624\, x^4 + 91045500\, x^5) \,\, y^{(3)} + \\
180(-282046871305467 - 38794189010031\, x + 478890241959\, x^2  -  \\
 28458003540\, x^3 + 21944300\, x^4) \,\, y^{(2)} + \\
 720(-1756652589603 + 23053844253\, x - 812236372\, x^2 + 522928\, x^3) \,\, y' =  0.
\end{array}
$$
} 
Observe that the Galois group of the algebraic equation (\ref{eq:y^5+2y^4-3y^3+y^2+5y+x=0})
is not solvable and hence its roots can only be expressed
in terms of special functions (e.g., functions of hypergeometric type, see \cite{Sturmfels}).
Despite this fact, Algorithm \ref{alg:main} allows one to compute the annihilating operator
for the roots of the equation (\ref{eq:y^5+2y^4-3y^3+y^2+5y+x=0}) using only the methods of
commutative algebra.
\label{ex:y^5+2y^4-3y^3+y^2+5y+x}
\end{example}

The following example provides a fundamental system of solutions to a fifth-order
linear differential equation with polynomial coefficients.

\begin{example}
\rm
For any $a\in\C^{*}$ a basis in the space of holomorphic solutions to the differential equation
$$
((256/5)\, a^5 x^3 + 625\, x^4) y^{(5)} + (384\, a^5 x^2 + 6875\, x^3) y^{(4)}
\phantom{--------}
$$
$$
\phantom{---}
+ (624\, a^5 x + 19500\,  x^2) y^{(3)}
+ (168\, a^5 + 14100\, x) y^{(2)} + 1344 y^{'} =0
$$
in a neighbourhood of a generic point $x\in\C$ is given by the roots of the algebraic equation
$ y^5+ay^4+x=0. $
\label{ex:y^5+ay^4+x}
\end{example}


\section{Software, hardware and examples}

Most of the examples in this paper were computed by means of a {\it Mathematica~6.0} package
developed by the authors and run on an Intel Core(TM) Duo CPU clocked at 2.00GHz.
The bottleneck of the algorithm is computing the syzygy module of an ideal in a ring of polynomials in several variables.
In some cases, we have used {\it Singular} for this.

\begin{example}
Generic quintic.
\rm
One of the goals of the research presented in this paper was to provide a computationally efficient
extension of the results of Section~5 in \cite{LarussonSadykov} beyond the class of algebraic
equations with elementary solutions. In this example, we demonstrate the efficiency of the described
approach by means of the generic monic quintic
\begin{equation}
y^5 + a_4 y^4 + a_3 y^3 + a_2 y^2 + a_1 y + x = 0.
\label{eq:generic quintic}
\end{equation}
Computing the annihilating operator for the solutions of this equation
has turned out to be a task of considerable computational complexity.
The full output of the algorithm is a vector of five polynomials with $ 4306 $ monomials
in total and is too large to display. The degrees of these polynomials
with respect to the variables $a_1, \ldots, a_4, x$ are $15, 20, 21, 22, 23.$
The leading coefficient of the annihilating operator has degree $7$ with respect to $x$
and splits into the product of two factors. One of them is the discriminant of (\ref{eq:generic quintic})
while the other is a polynomial of total degree $15$ with $ 264 $ terms.
The largest of the numeric coefficients in the annihilating operator for the generic monic quintic
equals $ 2739594525000.$
\label{ex:generic quintic}
\end{example}

\begin{example}
A monic tetranomial with generic coefficients.
\rm
By Theorem \ref{thm:determination} the determination of a solution to the algebraic equation
\begin{equation}
y^6 + ay^2 + by + x = 0
\label{eq:y^6+ay^2+by+x}
\end{equation}
equals five. The roots of (\ref{eq:y^6+ay^2+by+x}) at a generic point $x\in\C$ span the
space of holomorphic solutions of the following fifth order linear differential operator
with polynomial coefficients:
{\small
$$
\begin{array}{l}
(-255664128\, a^{10} + 395740000\, a^5 b^4 + 1599609375\, b^8 + 148780800\, a^6 b^2 x + 2859609375\, a b^6 x - \\
499654656\, a^7 x^2 - 1573425000\, a^2 b^4 x^2 + 1051704000\, a^3 b^2 x^3 + 16796160\, a^4 x^4) \\
(256\, a^5 b^2 + 3125\, b^6 - 1024\, a^6 x - 22500\, a b^4 x + 43200\, a^2 b^2 x^2 - 13824\, a^3 x^3 - 46656\, x^5) \frac{d^5}{dx^5} +
\end{array}
$$
$$
\begin{array}{l}
(916300234752\, a^{16} + 18677130035200\, a^{11} b^4 - 38094525000000\, a^6 b^8 - 134905517578125\, a b^{12} - \\
77437887971328\, a^{12} b^2 x + 107910691200000\, a^7 b^6 x + 332702753906250\, a^2 b^{10} x + \\
37877629059072\, a^{13} x^2 - 406052352000\, a^8 b^4 x^2 + 552267618750000\, a^3 b^8 x^2 - \\
128020162314240\, a^9 b^2 x^3 - 727448202000000\, a^4 b^6 x^3 + 267464667561984\, a^{10} x^4 + \\
43693344000000\, a^5 b^4 x^4 - 1306049062500000\, b^8 x^4 - 221398918963200\, a^6 b^2 x^5 - \\
2201395927500000\, a b^6 x^5 + 359825022517248\, a^7 x^6 + 1137850610400000\, a^2 b^4 x^6 - \\
711490376448000\, a^3 b^2 x^7 - 10579162152960\, a^4 x^8) \frac{d^4}{dx^4} +
\end{array}
$$
$$
\begin{array}{l}
30 (-3264411795456\, a^{12} b^2 + 5653930240000\, a^7 b^6 + 24016248046875\, a^2 b^{10} + \\
3132022849536\, a^{13} x - 5896377011200\, a^8 b^4 x - 145456875000\, a^3 b^8 x - \\
2678330105856\, a^9 b^2 x^2 - 31473123300000\, a^4 b^6 x^2 + 38750783864832\, a^{10} x^3 - \\
41250841920000\, a^5 b^4 x^3 - 221475515625000\, b^8 x^3 - 21927996518400\, a^6 b^2 x^4 - \\
344319609375000\, a b^6 x^4 + 52068310990848\, a^7 x^5 + 163366013160000\, a^2 b^4 x^5 - \\
95796142732800\, a^3 b^2 x^6 - 1311148560384\, a^4 x^7) \frac{d^3}{dx^3}
\end{array}
$$
$$
\begin{array}{l}
-120 (-441539395584\, a^{13} + 907546908800\, a^8 b^4 + 3520234921875\, a^3 b^8 - \\
122881784832\, a^9 b^2 x + 2644039462500\, a^4 b^6 x - 14126136238080\, a^{10} x^2 + \\
20651647680000\, a^5 b^4 x^2 + 88754326171875\, b^8 x^2 + 4397772787200\, a^6 b^2 x^3 + \\
122488079765625\, a b^6 x^3 - 16700798121984\, a^7 x^4 - 51772358925000\, a^2 b^4 x^4 + \\
28447624699200\, a^3 b^2 x^5 + 352185242112\, a^4 x^6) \frac{d^2}{dx^2}
\end{array}
$$
$$
\begin{array}{l}
-2520 (4704614400\, a^9 b^2 + 4079375000\, a^4 b^6 - 219268374528\, a^{10} x + \\
350501380000\, a^5 b^4 x + 1464693750000\, b^8 x + 5043513600\, a^6 b^2 x^2 + \\
1670568609375\, a b^6 x^2 - 196541448192\, a^7 x^3 - 597319515000\, a^2 b^4 x^3 + \\
304660958400\, a^3 b^2 x^4 + 3302125056\, a^4 x^5) \frac{d}{dx} +
\end{array}
$$
$$
\begin{array}{l}
5040 (-2372960256\, a^{10} + 4421840000\, a^5 b^4 + 18235546875\, b^8 - 1113523200\, a^6 b^2 x + \\
14120437500\, a b^6 x - 1226244096\, a^7 x^2 - 3581820000\, a^2 b^4 x^2 + \\
1587859200\, a^3 b^2 x^3 + 13436928\, a^4 x^4).
\end{array}
$$
}

\label{ex:y^6+ay^2+by+x}
\end{example}

Recall that the Newton polytope of a multivariate Laurent polynomial
$$
f(x_1,\ldots,x_n) = \sum_{\alpha = (\alpha_1,\ldots,\alpha_n) \in A \subset \midZ^n} c_\alpha \, x_{1}^{\alpha_1} \ldots x_{n}^{\alpha_n}
$$
supported in a finite set $A$ is defined to be the convex hull of $A.$
Intensive experiments suggest that there is an intrinsic relation between the
two extreme coefficients in the optimal annihilating differential operator for an
algebraic function. As we have seen in several examples before, the leading
coefficient in the annihilating operator is typically given by the product
of the discriminant of the defining algebraic equation and some other factor which
has no apparent relation to the initial algebraic equation.
The following conjecture suggests the structure of the Newton polytope of this polynomial
factor.
\begin{conjecture}
Let
$$
\sum_{k = \ell}^{d} p_{k}(a_1, \ldots, a_{n},x) \frac{d^k}{dx^k}
$$
be the optimal annihilating operator for the algebraic function defined by the relation
$P(x,y) := y^m + a_{1} y^{m_1} + \ldots + a_n y^{m_n} + x = 0.$
Denote by $\mathfrak{D}(a_1, \ldots, a_{n},x)$ the discriminant of
$P(x,y)$ computed with respect to $y.$
Then the polynomials
$$
p_{d}(a_1, \ldots, a_{n},x)/\mathfrak{D}(a_1, \ldots, a_{n},x) \text{\ and\ } p_{\ell}(a_1, \ldots, a_{n},x)
$$
consist of monomials with the same exponent vectors.
In particular, they contain equally many monomials and have equal Newton polytopes.
\label{conj:Newton polytope of parasite factor}
\end{conjecture}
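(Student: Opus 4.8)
We outline a possible approach to Conjecture~\ref{conj:Newton polytope of parasite factor}. The plan is to combine a weighted-homogeneity constraint --- which pins down the common weighted degree of the two polynomials and reduces the statement to one about finite sets of lattice points lying in a single affine hyperplane --- with an explicit analysis of the extreme coefficients via the residue formulas in the proof of Theorem~\ref{thm:syzygies} and the known description of the Newton polytope of the discriminant. Assign to the polynomial ring $\C[a_1,\ldots,a_n,x]$ the grading $\deg y=1$, $\deg a_j=m-m_j$, $\deg x=m$, so that $P(x,y)$ becomes weighted-homogeneous of degree $m$ and the implicitly defined function $y(x,a_1,\ldots,a_n)$ is weighted-homogeneous of degree $1$. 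Since $\frac{d}{dx}$ lowers the weighted degree by $m$ and the optimal operator is unique up to a scalar, the $\C^{*}$-action rescaling by these weights carries that operator to a scalar multiple of itself; hence it is weighted-homogeneous, that is, there is an integer $c$ with $\deg p_k=c+km$ for $\ell\le k\le d$. A short computation gives $\deg\mathfrak{D}=m(m-1)$, since up to a monomial $\mathfrak{D}$ equals $\prod_{i<j}(s_i-s_j)^2$ and every root $s_i$ has weighted degree $1$. Thus $p_{\ell}$ and $p_d/\mathfrak{D}$ are homogeneous of weighted degrees $c+\ell m$ and $c+m(d-m+1)$, respectively.

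Making these two degrees agree forces the numerical identity $\ell=d-m+1$, which I would establish directly. When the greatest common divisor of $m,m_1,\ldots,m_n$ equals $1$ --- the setting of all the examples above, and the one I shall treat --- one has $d=R(m,m_1,\ldots,m_n)\in\{m-1,m\}$ by Theorem~\ref{thm:determination}, with $d=m$ exactly when $m_1=m-1$. For generic values of the parameters the monodromy group of (\ref{eq:mainAlgebraic}) is the full symmetric group $S_m$, so the space $V$ spanned by the branches of $y$ carries the permutation representation of $S_m$ when $d=m$, and its $(m-1)$-dimensional standard summand when $d=m-1$ (the unique relation among the $m$ branches then being $\sum_i s_i=0$). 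A polynomial in $x$ lying in $V$ is monodromy-invariant, hence belongs to $V^{S_m}$, which is spanned by the constant $\sum_i s_i$ in the first case and vanishes in the second. Therefore $\ell\le 1$, with $\ell=1$ precisely when $1\in V$, that is, when $m_1=m-1$, equivalently when $d=m$; in every case $\ell=d-m+1$, and $p_{\ell}$, $p_d/\mathfrak{D}$ are homogeneous of the same weighted degree. The conjecture is thereby reduced to showing that the supports of $p_{\ell}$ and of $p_d/\mathfrak{D}$, now known to lie in one affine hyperplane, coincide.

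For the remaining part I would compute both Newton polytopes from a determinantal description of the optimal operator. Writing $W=W(s_{i_1},\ldots,s_{i_d})$ for the wronskian of a basis of $V$ and $W_k$ for the $d\times d$ minor of the associated wronskian matrix obtained by deleting the row of $k$-th derivatives, the monic operator $p_d^{-1}L$ has coefficients $(-1)^{d+k}W_k/W$; hence $p_d$ may be taken to be the least common denominator of these rational functions of $a_1,\ldots,a_n,x$, and $p_k=(-1)^{d+k}(W_k/W)\,p_d$. Using $\frac{\partial^j s_i}{\partial x^j}=(-1)^j(j-1)!\,\underset{t=s_i}{\rm res}\,\frac{1}{{P(t)}^{j}}$, the identity $W_0=W(s_{i_1}',\ldots,s_{i_d}')=W(1,s_{i_1},\ldots,s_{i_d})$, and the explicit partial-fraction expansions of these residues displayed in the proof of Theorem~\ref{thm:syzygies}, one can expand $W$, $W_{\ell}$ and $W_d$ as sums over multi-indices, reduce them modulo the Vieta ideal $\mathcal{V}$, and record which monomials in $a_1,\ldots,a_n,x$ survive. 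Comparing the outcome with the known GKZ description of the Newton polytope of the discriminant $\mathfrak{D}$, and invoking the Minkowski cancellation law for convex bodies, the conjecture then follows from the single identity that the Newton polytope of $p_d$ is the Minkowski sum of the Newton polytopes of $\mathfrak{D}$ and $p_{\ell}$, together with the (expected) fact that neither $p_{\ell}$ nor $p_d/\mathfrak{D}$ omits a lattice point of its own Newton polytope.

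The step I expect to be the main obstacle is the last one: there is no evident reason why the parasitic factor $p_d/\mathfrak{D}$, which --- as remarked above --- has no apparent relation to the defining equation, should carry exactly the Newton polytope of the lowest coefficient $p_{\ell}$. The natural mechanism to look for is a hidden self-duality of the optimal operator. The reciprocal substitution $y\mapsto 1/y$ is a symmetry of the Gelfand--Kapranov--Zelevinsky system (\ref{eq:MayrSystem}) that interchanges the coefficient of $y^m$ with the constant term; after passing to the normalization (\ref{eq:mainAlgebraic}) and dehomogenizing, it ought to interchange --- up to an explicit gauge transformation --- the top and the bottom of $L$, forcing the formal adjoint $L^{\dagger}$ to be of the form $fLg$ for rational functions $f,g$ whose product is governed by $\mathfrak{D}$. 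Turning this into a proof --- or, alternatively, establishing the Minkowski identity above directly through the combinatorics of the secondary polytope of $\{m,m_1,\ldots,m_n,0\}$ --- is where the real work lies, and is presumably why the statement is recorded here only as a conjecture.
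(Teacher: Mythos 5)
First, a point of calibration: the paper does not prove this statement. It is stated as a conjecture, supported only by the computations recorded in Table~1, so there is no ``paper's own proof'' to match your argument against. What you have written is a research programme, and you yourself flag in the final paragraph that its decisive step is missing. That said, the first part of your proposal is correct and constitutes genuine progress beyond what the paper records. The grading $\deg a_j=m-m_j$, $\deg x=m$ does make $P$ weighted-homogeneous, the uniqueness of the optimal operator up to scalar does force $\deg p_k=c+km$, and $\deg\mathfrak{D}=m(m-1)$ is right; your formula $\ell=d-m+1$ and the monodromy argument for it (only polynomials in $V$ are the constants $\C\cdot\sum_i s_i$, which are nonzero exactly when $m_1=m-1$) check out against every example in the paper. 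For instance, for $y^4+ay^3+by^2+x=0$ the leading coefficient in Table~1 is $x\cdot(14b^3-4a^2b^2+8bx-3a^2x)\cdot(\text{weight-}8\text{ factor})$, the discriminant is $x$ times that weight-$8$ factor, and the quotient $14b^3-4a^2b^2+8bx-3a^2x$ indeed has the same support $\{b^3,a^2b^2,bx,a^2x\}$ as $p_1=120(74b^3-21a^2b^2+24bx-9a^2x)$, both of weight $6$. So you have correctly reduced the conjecture to a statement about two finite sets of lattice points lying in a single affine hyperplane.

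The gap is that lying in a common hyperplane is a codimension-one constraint on the supports, while the conjecture asserts they are \emph{equal}, and nothing in your second and third paragraphs closes that distance. The wronskian expansion you propose is exactly the computation the paper's introduction describes as infeasible, and even granting it, your route needs two unproved inputs: the Minkowski-sum identity $N(p_d)=N(\mathfrak{D})+N(p_\ell)$ (only the inclusion $N(p_d)=N(\mathfrak{D})+N(p_d/\mathfrak{D})$ is automatic, and Minkowski cancellation then merely transfers the problem), and the ``expected fact'' that neither $p_\ell$ nor $p_d/\mathfrak{D}$ omits a lattice point of its own Newton polytope --- which is needed because the conjecture is about supports, not just polytopes, and which is precisely the kind of statement that fails for resultant-like polynomials without a dedicated argument. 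The proposed self-duality under $y\mapsto 1/y$ is an interesting heuristic, but the substitution does not preserve the normalization (\ref{eq:mainAlgebraic}) (it sends a monic polynomial with constant term $x$ to one with leading coefficient $x$ and support $\{0,m-m_1,\ldots,m-m_n,m\}$), so the claimed gauge equivalence between $L$ and its adjoint would have to be constructed, not merely invoked. In short: your weighted-homogeneity reduction is sound and worth recording, but the conjecture remains open after your argument, exactly as it does in the paper.
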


The following table summarizes the results of our computer experiments and illustrates Conjecture \ref{conj:Newton polytope of parasite factor}.
It gives the linear ordinary differential operator whose solution space
is spanned by the branches of an implicitly defined algebraic function $y=y(x),$
the order of this operator and the multidegree of its leading coefficient with respect to $x$
and the parameters of equation listed in lexicographic order.
\vfill

\newpage
{\small
\centerline{\bf Table~1: Computation times for annihilating operators and their properties}
\vskip0.2cm
\noindent
\begin{tabular}{|c|c|c|c|c|}
\hline
Algebraic curve   &  The annihilating operator for $y=y(x)$                         & Order & \begin{minipage}{1.3cm} Leading coeff. \end{minipage} & \begin{minipage}{1.8cm} Comput. time (sec.) \end{minipage} \\
\hline
$y^4+a y^3 + x = 0$         & $(27 a^4 x^2 - 256 x^3) \frac{d^4}{dx^4} + 4 x(27 a^4 - 416 x) \frac{d^3}{dx^3}$   &  $4$  &   (2,4) & 0.374        \\
                            & $ + 60(a^4 - 36 x) \frac{d^2}{dx^2} -360 \frac{d}{dx}$      &       &                &              \\
\hline
                            & $x(14 b^3 - 4 a^2 b^2 + 8 b x - 3 a^2 x)(16 b^4 - 4 a^2 b^3 -$         &       &            &              \\
$y^4+a y^3 + b y^2+x=0$     & $ 128 b^2 x + 144 a^2 b x - 27 a^4 x + 256 x^2) \frac{d^4}{dx^4}$                    &  $4$ &  (4,6,7) & 1.03  \\
                            & $+ \ldots+120(74 b^3 - 21 a^2 b^2 + 24 b x - 9 a^2 x)\frac{d}{dx}$ &       &                &              \\
\hline
                            & $(45 c^2 + 14 b^3 - 47 a b c - 4 a^2 b^2 + 12 a^3 c + 8 b x$&       &                &              \\
$y^4+a y^3 + b y^2 + $      & $- 3 a^2 x)(27 c^4 +\ldots-256 c^3) \frac{d^4}{dx^4} +$     &  $4$  &  (4,7,7,6)     &  3.011       \\
$c y+x=0$                   & $\ldots+120(243 c^2 + 74 b^3 - 249 a b c - 21 a^2 b^2  $    &       &                &              \\
                            & $+ 63 a^3 c + 24 b x - 9 a^2 x) \frac{d}{dx} $              &       &                &              \\
\hline
$y^5 + a y + x = 0$         & $(256 a^5 + 3125 x^4) \frac{d^4}{dx^4} + 31250 x^3 \frac{d^3}{dx^3} +$&  $4$ & (4,5) &  2.012       \\
                            & $73125 x^2 \frac{d^2}{dx^2} + 31875 x \frac{d}{dx} - 1155$   &      &                &              \\
\hline
                            & $ (51200 b^6 - 15930 a^4 b^3 - 2187 a^8 + 68000 a b^4 x   $ &       &                &              \\
                            & $ -1350 a^5 b x - 26250 a^2 b^2 x^2) (256 b^5 - $           &       &                &              \\
$y^5 + a y^2 + b y + x=0$   & $ 27 a^4 b^2 - 1600 a b^3 x + 108 a^5 x +2250 a^2 b x^2$    &  $4$  &  (6,13,11)     &  5.008       \\
                            & $  + 3125 x^4) \frac{d^4}{dx^4} +\ldots$                    &       &                &              \\
                            & $+ 120(492800 b^6 - 139995 a^4 b^3 - 16038 a^8 +$           &       &                &              \\
                            & $222000 a b^4 x + 8100 a^5 b x - 39375 a^2 b^2 x^2)$        &       &                &              \\
\hline
$y^5 + a y^3 + b y^2 + c y$ & $(102400 c^6+\ldots-2500 a^3 b x^3)(256 c^5 - 27 b^4 c^2  $ &       &  (7,15,        &              \\
$+x=0$                      & $ + 144 a b^2 c^3 +\ldots+3125 x^4)\frac{d^4}{dx^4}$        &  $4$  &   13,11)       & 31.668       \\
                            & $ +\ldots -120(985600 c^6+\ldots-625 a^3 b x^3)$            &       &                &              \\
\hline
                            & $(265 a^5 x^3 + 3125 x^4) \frac{d^5}{dx^5} + 5 x^2 $        &       &                &              \\
$y^5 + a y^4 + x = 0$       & $(384 a^5 + 6875 x) \frac{d^4}{dx^4} + 780 x(4 a^5 + 125 x) \frac{d^3}{dx^3}   $   & $5$ & (4,5) & 10.436  \\
                            & $+ 60(14 a^5 + 1175 x) \frac{d^2}{dx^2}+ 6720 \frac{d}{dx}$ &       &                &              \\
\hline
                            & $(1680 a b^9 +\ldots+56 a^9 x^2)(108 b^5 x^2 -    $         &       &                &              \\
$y^5 + a y^4 + b y^3 + x=0$ & $27 a^2 b^4 x^2 + 2250 а b^2 x^3 - 1600 a^3 b x^3 + $       &  $5$  &    (6,15,14)   & 38.563       \\
                            & $256 a^5 x^3 + 3125 x^4) \frac{d^5}{dx^5}+\ldots+$          &       &                &              \\
                            & $1680(20160 a b^9 +\ldots+224 a^9 x^2)\frac{d}{dx}$         &       &                &              \\
\hline
                            & $(160380 c^8 +\ldots- 56 a^9 x^3)(108 c^5 x +$              &       &                &              \\
$y^5 + a y^4 + b y^3 +$     & $  16 b^3 c^3 x +\ldots+ 3125 x^4) \frac{d^5}{dx^5}   $     &  $5$  &  (7,16,        &  279.242     \\
$c y^2 + x = 0$             & $+\ldots+1680(2779920 c^8 - 1242720  b^3 c^6 +\ldots  $     &       &   14,13)       &              \\
                            & $- 224 a^9 x^3)\frac{d}{dx}$                                &       &                &              \\
\hline
$y^5 + a y^4 + b y^3 +$     & $(4928000 d^6 + \ldots + 56 a^9 x^3)(256 d^5 - 27 c^4 d^2$  &       & (7,17,15,      &              \\
$c y^2 + d y + x = 0$       & $ + 144 b c^2 d^3 + \ldots + 3125 x^4) \frac{d^5}{dx^5}$    &  $5$  & 13,11)         &  3038.47     \\
                            & $ + \ldots+1680(9011200 d^6+\ldots+224 a^9 x^3)\frac{d}{dx}$&       &                &              \\
\hline
                            & $(6398437500 c^{10} + \ldots + 67184640 a^2 b c^2 x^5)    $ &       &                &              \\
$y^6 + a y^3 + b y^2 +    $ & $(3125 c^6 + 256 b^5 c^2 + \ldots                         $ &  $5$  & (10,18,        &  799.427     \\
$c y + x = 0$               & $ - 46656 x^5)\frac{d^5}{dx^5} + \ldots + 5040            $ &       &  17,16)        &              \\
                            &   $(72942187500 c^{10} + \ldots + 13436928 a^2 b c^2 x^5) $ &       &                &              \\
\hline
\end{tabular}

}

\end{document}